\newtheorem{lemma}{Lemma}
\newtheorem{theorem}[lemma]{Theorem}
\newtheorem{conjecture}[lemma]{Conjecture}
\newtheorem{prob}[lemma]{Problem}
\newcommand{\R}{\mathbb{R}}
\newcommand{\F}{\mathcal{F}}
\newcommand{\K}{\mathcal{K}}
\title{On a colorful problem by Dol'nikov concerning translates of convex bodies}
\author[L. Martínez-Sandoval]{Leonardo Martínez-Sandoval}
\address[L. Martínez-Sandoval]{Facultad de Ciencias, UNAM, Ciudad de México, México}
\email{leomtz@ciencias.unam.mx}
\author[E. Roldán-Pensado]{Edgardo Roldán-Pensado}
\address[E. Roldán-Pensado]{Centro de Ciencias Matemáticas, UNAM Campus Morelia, Morelia, Mexico}
\email{e.roldan@im.unam.mx}
\keywords{Colorful theorems; Piercing number; Banach-Mazur metric}
\begin{document}
	
	\begin{abstract}
		In this note we study a conjecture by Jerónimo-Castro, Magazinov and Soberón which generalized a question posed by Dol'nikov. Let $\F_1,\F_2,\dots,\F_n$ be families of translates of a convex compact set $\K$ in the plane so that each two sets from distinct families intersect. We show that, for some $j$, $\bigcup_{i\neq j}\F_i$ can be pierced by at most $4$ points. To do so, we use previous ideas from Gomez-Navarro and Roldán-Pensado together with an approximation result closely tied to the Banach-Mazur distance to the square.
	\end{abstract}
	
	\maketitle
	
	\section{Introduction}
	
	In 2011 Dol'nikov posed the following problem \cite[Problem 8]{MRB2012}.
	\begin{prob}\label{prob}
		Let $\F_1$, $\F_2$ and $\F_3$ be families of translates of a convex compact set $\K$ in the plane such that $A\cap B\neq\emptyset$ for each $A\in \F_i$, $B\in \F_j$ with $i\neq j$. Is it always true that some $\F_i$ has piercing number at most $3$?
	\end{prob}
	The answer to this problem seems to be affirmative. The uncolored version (when $\F_1=\F_2=\F_3$) was solved affirmatively by Karasev \cite{Kar2000}, who later generalized it to higher dimensions \cite{Kar2008}.
	Jerónimo-Castro, Magazinov and Soberón \cite{JMS2015} gave a positive answer to Problem \ref{prob} when $\K$ is either centrally symmetric or a triangle. They also stated the following stronger conjecture.
	\begin{conjecture}\label{conj}
		For $n\ge 2$, let $\F_1,\F_2,\dots,\F_n$ be families of translates of a convex compact set $\K$ in the plane such that $A\cap B\neq\emptyset$ for each $A\in \F_i$, $B\in \F_j$ with $i\neq j$. Then there is some index $j$ such that $\bigcup_{i\neq j}\F_i$ has piercing number at most $3$.
	\end{conjecture}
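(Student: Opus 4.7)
The plan is to prove Conjecture \ref{conj} by sharpening the bound of $4$ established in this paper to the optimal value $3$. I would proceed in three stages: a reduction to $n=3$, a strengthened version of the centrally symmetric and triangle cases of Jerónimo-Castro, Magazinov and Soberón, and a refined approximation argument for general $\K$.

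First I would reduce to $n=3$. Because the cross-intersection hypothesis becomes strictly stronger as $n$ grows, one expects the conclusion for any $n\geq 3$ to follow from the three-family case. Concretely, pick any three families $\F_{i_1},\F_{i_2},\F_{i_3}$; applying the $n=3$ case produces an index $j\in\{i_1,i_2,i_3\}$ and three piercing points $p_1,p_2,p_3$ for the union of the other two. Every remaining family $\F_i$ ($i\notin\{i_1,i_2,i_3\}$) must pairwise meet each $\F_{i_k}$, and one hopes this forces $\F_i$ to be captured by small neighborhoods of the $p_k$; a selection/compactness argument should then show that $\{p_1,p_2,p_3\}$, or a small perturbation, pierces all of $\bigcup_{i\neq j}\F_i$.

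Second, for $n=3$ I would first settle the case in which $\K$ is centrally symmetric. Here two translates intersect iff the difference of their centers lies in $2\K$, converting the problem into a combinatorial statement about three point sets in $\R^2$ and coverings by translates of $\K$. The proof of Problem \ref{prob} in \cite{JMS2015} produces three points piercing a single chosen family; the strengthening to Conjecture \ref{conj} requires these three points to simultaneously pierce a second family. I would attempt to extract such a triple by analysing the intersection pattern of the dual bodies $2\K-C_i$, where $C_i$ is the center set of $\F_i$, and invoking a first-selection-type statement to locate three simultaneously deep points. The triangle case should admit an analogous, more geometric treatment.

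Third, for general $\K$ I would approximate by the nearest centrally symmetric body (or triangle) using the Banach-Mazur metric and transfer the piercing configuration. The main obstacle is precisely the gap between $4$ and $3$: the paper's approximation naturally loses one point, accounting for the bound $4$. To close this gap, I expect one must either (i) replace the approximation step by a topological selection of KKM or Brouwer type that produces the piercing triple directly, or (ii) establish a colorful analogue of Karasev's theorem \cite{Kar2000} in which the coloring is handled intrinsically rather than through approximation. The hard part will be showing that no third "correction" point is ever required, and I expect this to be where a genuinely new idea beyond the paper's approach is needed.
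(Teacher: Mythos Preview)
The paper does not prove Conjecture~\ref{conj}; it is stated there as an open problem. What the paper actually proves is Theorem~\ref{thm}, the weakening in which the piercing number is $4$ rather than $3$. So there is no ``paper's own proof'' of this statement for comparison, and your proposal is in effect a research outline for an open problem, not a proof.

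As a plan, each of the three stages has a genuine gap. The reduction to $n=3$ is not valid as written: applying the three-family case to $\F_{i_1},\F_{i_2},\F_{i_3}$ yields three points piercing two of those three families, but there is no mechanism forcing the remaining families $\F_i$ to be pierced by the same three points. The phrase ``one hopes this forces $\F_i$ to be captured by small neighborhoods of the $p_k$'' is exactly the missing step, and cross-intersection with the $\F_{i_k}$ does not by itself pin down the translates in $\F_i$ that tightly. In the second stage, the result of Jer\'onimo-Castro, Magazinov and Sober\'on for centrally symmetric $\K$ and triangles settles only Problem~\ref{prob}, not Conjecture~\ref{conj}; your sketch of how to upgrade it (``a first-selection-type statement to locate three simultaneously deep points'') does not name a concrete statement that would do the job. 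In the third stage you yourself note that the approximation route inherently loses one piercing point and that closing the gap needs ``a genuinely new idea beyond the paper's approach''; that is an accurate diagnosis, but it means the proposal as it stands is not a proof but a list of obstacles.
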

	In the same paper they showed that this conjecture is true when $\K$ is an Euclidean disk.
	
	Recently Gomez-Navarro and Roldán-Pensado proved that Problem \ref{prob} has a positive answer when $\K$ is either of constant width or is close to a Euclidean disk with respect to the Banach-Mazur distance \cite{GR2023}. They also showed that Dol'nikov's problem has a positive answer with $8$ piercing points instead of $3$ and that Conjecture \ref{conj} is true with $9$ piercing points instead of $3$.
	
	The purpose of this paper is to prove Conjecture \ref{conj} with $4$ piercing points instead of $3$.
	\begin{theorem}\label{thm}
		For $n\ge 2$, let $\F_1,\F_2,\dots,\F_n$ be families of translates of a convex compact set $\K$ in the plane such that $A\cap B\neq\emptyset$ for each $A\in \F_i$, $B\in \F_j$ with $i\neq j$. Then there is some index $j$ such that $\bigcup_{i\neq j}\F_i$ has piercing number at most $4$.
	\end{theorem}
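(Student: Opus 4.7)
My strategy is a dichotomy based on the Banach-Mazur distance of $\K$ to the square. Since two translates $\K+a$ and $\K+b$ meet iff $a-b\in\K-\K$, and piercing is affinely invariant, I may pass to $\tfrac12(\K-\K)$ and assume $\K$ is centrally symmetric, with Minkowski norm $\|\cdot\|_\K$.

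\textbf{Exact square.} If $\K = [-1,1]^2$, the cross-intersection condition becomes the coordinate-wise inequalities $|a_x - b_x|, |a_y - b_y| \le 2$ for centers $a, b$ in different families. For $n \ge 2$, a short pigeonhole argument (the $x$-range of any single family cannot exceed $4$, else no center of a third family can satisfy the inequality) places all centers in an axis-aligned $4 \times 4$ box. Four piercing points in a $2 \times 2$ grid with spacing $2$ then pierce every translate in $\bigcup_i \F_i$, so the conclusion holds without even dropping a family.

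\textbf{Close to the square.} For $\K$ at Banach-Mazur distance $\lambda > 1$ from the square, $Q \subset \K \subset \lambda Q$, the above loses its tight area fit. I would regain room by dropping $\F_j$ with the largest $\K$-spread, so that $\bigcup_{i \neq j} C_i$ lies in $\bigcap_{b \in C_j}(b + 2\K)$, a region whose diameter shrinks linearly with the spread of $\F_j$. A quantitative balance of this shrinkage against the BM factor --- the ``approximation result closely tied to the Banach-Mazur distance to the square'' promised in the abstract --- should let four piercing points cover $\bigcup_{i \ne j} \F_i$ up to a controlled BM threshold $\lambda_0$.

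\textbf{Complementary regime and main obstacle.} For $\K$ outside this neighborhood of the square, classical planar BM geometry places $\K$ inside a neighborhood of a Euclidean disk; the close-to-disk methods of \cite{GR2023}, together with the base case of the exact disk at piercing number $3$ from \cite{JMS2015}, then finish the argument with $4$ piercing points. The hardest part is the quantitative shrinkage step: the square case is area-tight (four reaches of area $4$ covering exactly a $4 \times 4$ target), so any BM perturbation eats into that slim margin. Matching the two BM thresholds so that every centrally symmetric $\K$ is covered by at least one regime is the other delicate piece.
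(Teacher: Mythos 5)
Your opening reduction is where the proposal breaks down. Replacing $\K$ by $\tfrac12(\K-\K)$ preserves the intersection graph of the translates (since $\K+a$ meets $\K+b$ iff $a-b\in\K-\K$), but it does not preserve piercing numbers: a point $p$ pierces $\K+a$ iff $a\in p-\K$, so piercing a family of translates of $\K$ by $k$ points is covering the set of translation vectors by $k$ translates of $-\K$, and translates of $-\K$ and of $\tfrac12(\K-\K)$ have genuinely different covering power (for a triangle $T$, for instance, $\tfrac12(T-T)$ is a hexagon of $1.5$ times the area of $T$ and contains no translate of $-T$). If this symmetrization were legitimate, Theorem \ref{thm} --- and indeed Dol'nikov's original problem with $3$ points --- would follow at once from the centrally symmetric case already settled in \cite{JMS2015}, and the problem would not be open. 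The later steps also have serious problems: the asserted dichotomy that a body far from the square in Banach--Mazur distance must be close to a Euclidean disk is false (the Banach--Mazur compactum of planar symmetric bodies is not an interval between the square and the disk; affine-regular hexagons, say, sit at a definite distance from both), no thresholds are supplied, and the ``close to the square'' regime is a plan rather than an argument --- the claim that the diameter of $\bigcap_{b\in C_j}(b+2\K)$ shrinks linearly in the spread of $\F_j$ and that this can be balanced against the Banach--Mazur factor is never quantified. Only the exact-square computation is correct, and it is a measure-zero special case.

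The paper's proof has a different shape and involves no case analysis on $\K$. Lemma \ref{lem:transversal} says that if no family can be dropped with the remainder pierced by $3$ points, then the whole union $\bigcup_i\F_i$ admits a transversal line $\ell$ in some direction $u$. Lemma \ref{lem:parallelogram} --- the ``approximation result'' from the abstract --- is not a closeness-to-the-square hypothesis but a universal statement: \emph{every} convex body contains a parallelogram $P$ with one side in the prescribed direction $u$ such that a translate of $2P$ contains $\K$. Projecting along the conjugate direction $v$ and running the one-dimensional colorful Helly argument produces a second transversal $\ell'$ in direction $v$ for $\bigcup_{i\neq j}\F_i$ for some $j$; the translation vectors of bodies meeting both $\ell$ and $\ell'$ then lie in a set congruent to $2P$, which is covered by four copies of $-P\subset -\K$, i.e.\ pierced by four points. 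If you want to salvage your approach, you would need to work directly with covering by translates of $-\K$ rather than symmetrizing, and replace the square/disk dichotomy with an approximation valid for all bodies --- which is essentially what Lemma \ref{lem:parallelogram} provides.
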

	
	The proof follows the ideas used to prove Theorem 2.3 from \cite{GR2023}, together with an approximation result which is related to the Banach-Mazur distance to the square. The auxiliary results we require are stated in Section \ref{sec:lemmas}. Section \ref{sec:proof} contains the proof of Theorem \ref{thm}.
	
	\section{Two auxiliary lemmas}\label{sec:lemmas}
	
	Our proof is based on two lemmas. The first one is a special case of a theorem proved by Gomez-Navarro and Roldán-Pensado \cite[Theorem 2.5(a)]{GR2023}.
	\begin{lemma}\label{lem:transversal}
		For $n\ge 2$, let $\F_1,\F_2,\dots,\F_n$ be families of translates of a convex compact set $\K$ in the plane such that $A\cap B\neq\emptyset$ for each $A\in \F_i$, $B\in \F_j$ with $i\neq j$. If for every index $j$ the family $\bigcup_{i\neq j}\F_i$ has piercing number larger than $3$, then there is a line transversal to $\bigcup_i \F_i$.
	\end{lemma}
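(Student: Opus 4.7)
The plan is to prove the contrapositive: I assume that no line transversal to $\bigcup_i \F_i$ exists, and I aim to produce an index $j$ such that $\bigcup_{i\neq j}\F_i$ admits a $3$-point piercing set.

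The starting point is projection duality. A line in direction $\theta$ meets every set in $\bigcup_i \F_i$ if and only if the projections of these sets onto the orthogonal line share a common point; as these projections are intervals, Helly's theorem in dimension one makes this equivalent to pairwise overlap. Crucially, any two sets from distinct families already intersect in the plane, so their projections in \emph{any} direction overlap. Consequently, whenever the projections in some direction $\theta$ fail to share a point, the witness pair (one set attaining the largest left endpoint and one attaining the smallest right endpoint) must lie in the \emph{same} family.

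Under the contrapositive hypothesis, this failure holds for every $\theta\in S^1$, and each direction picks out an ``obstructing family'' $\F_{j(\theta)}$. My next step would be to isolate a single index $j$ accounting for the obstruction uniformly. As $\theta$ varies continuously, the witness pair is locally constant on generic intervals of directions, and a compactness/topological argument on $S^1$ should collapse the finitely many candidate indices to a single $j$, or at least exhibit a direction $\theta^{*}$ for which the projections of $\bigcup_{i\neq j}\F_i$ share a common point. This yields a line transversal $\ell^{*}$ to $\bigcup_{i\neq j}\F_i$. Finally, combining $\ell^{*}$ with the cross-family pairwise intersection inside $\bigcup_{i\neq j}\F_i$, a Helly/Grünbaum-type argument for translates of $\K$ meeting a common line would bound the piercing number of $\bigcup_{i\neq j}\F_i$ by $3$.

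The main obstacle I anticipate is the isolation step: ruling out a ``cyclic'' assignment of obstructing families across directions in which no single index $j$ dominates. Overcoming this likely requires exploiting the structure of the Minkowski difference $\K-\K$ to control how witness pairs transition between families as $\theta$ rotates, together with the extra rigidity given by the cross-family intersection condition. A secondary difficulty is the final piercing bound of $3$, which, since within a single $\F_i$ sets need not pairwise intersect, cannot follow from Grünbaum's classical theorem directly and will require leveraging the line transversal $\ell^{*}$ to reduce the situation to a quasi one-dimensional piercing problem.
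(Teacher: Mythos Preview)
Your outline is candid about its two gaps, and both are real; the paper closes them with a different toolkit than the one you reach for.

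The decisive ingredient you are missing is Karasev's theorem (the uncolored version of Dol'nikov's question): any \emph{pairwise intersecting} family of translates of a planar convex body has piercing number at most $3$. The paper names exactly this, together with the equivalence ``a family of planar convex sets is pairwise intersecting if and only if it admits a line transversal in every direction,'' as the two tools behind the lemma, and otherwise defers to Theorem~2.5(a) of \cite{GR2023}. With Karasev available, step~4 of your plan becomes unnecessary---which is fortunate, because a single transversal line $\ell^{*}$ together with only cross-colour intersections does not force piercing number at most $3$: what you describe there is essentially the original colourful problem on $n-1$ families with one extra transversal line, and there is no Gr\"unbaum-type bound in that generality. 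The correct target in the contrapositive is not ``$\bigcup_{i\neq j}\F_i$ has a transversal in one direction $\theta^{*}$'' but rather ``$\bigcup_{i\neq j}\F_i$ is pairwise intersecting'' (equivalently, by the second tool, has a transversal in \emph{every} direction), which then feeds directly into Karasev.

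This also sharpens what your isolation step must accomplish, and shows why the $S^{1}$ continuity picture is more delicate than ``collapse to a single $j$'': already for four unit squares arranged as two crossing disjoint pairs, the obstructing index genuinely switches with the direction, and the lemma holds because the \emph{full} union acquires a diagonal transversal rather than because one colour can be singled out. The paper does not work through this combinatorics here, so your projection/compactness idea may well be part of the argument in \cite{GR2023}; but as written your proposal aims at too weak a conclusion in step~3 and invokes a nonexistent bound in step~4.
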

	
	The tools behind the proof of Lemma \ref{lem:transversal} are the uncolored version of Problem \ref{prob} and the fact that a family of convex sets $\F$ on the plane has a transversal line in every direction if and only if $\F$ is pairwise intersecting. We refer the reader to \cite{GR2023} for the full details.
	
	The second lemma essentially gives a way of approximating a convex body by a parallelogram. It implies that the Banach-Mazur distance from the square to any planar convex body is at most $2$. This was already known \cite[Theorem 5.5]{GLMP2004}, however we require something slightly stronger. Given a convex body $\K$, it is known that the parallelogram $P$ of maximal area contained in $\K$ satisfies that there is a translation of $2P$ that contains $\K$. We require a similar result where, instead of $P$ having maximal area, the direction of one of the sides of $P$ is fixed.
	\begin{lemma}\label{lem:parallelogram}
		Let $\K$ be a convex body in the plane and let $u$ be a fixed direction. Then, there is a parallelogram $P\subset \K$ such that one of the sides of $P$ has direction $u$ and there is a translated copy $Q$ of $2P$ such that $\K\subset Q$.
	\end{lemma}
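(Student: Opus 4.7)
After rotating coordinates we may assume $u$ is horizontal. Consider parallelograms inscribed in $\K$ whose top and bottom sides are horizontal; each such parallelogram is determined by the heights $y_1<y_2$ of its horizontal sides, the common length $L\le \min(w(y_1),w(y_2))$ of those sides (where $w(y)$ is the horizontal chord length of $\K$ at height $y$, a concave function), and the horizontal positions of the sides within the corresponding chords of $\K$. Take $P$ of maximum area $LH$, with $H:=y_2-y_1$, and apply an area-preserving shear that fixes the horizontal direction to bring $P$ to the axis-aligned rectangle $[-L/2,L/2]\times[-H/2,H/2]$. A perturbation argument using maximality forces $w(\pm H/2)=L$: if, say, $w(H/2)>L$, the test parallelogram with horizontal sides at heights $-H/2$ and $H/2+\delta$ has length at least $L$ and area at least $L(H+\delta)>LH$ for small $\delta>0$, contradicting maximality.

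It remains to show that $\K$ sits in a translate of $2P=[-L,L]\times[-H,H]$, equivalently that the vertical extent of $\K$ is at most $2H$ and the horizontal extent at most $2L$. For the vertical bound, let $A:=y_t-H/2$ and $B:=-H/2-y_b$, where $y_b\le y_t$ are the extreme $y$-coordinates in $\K$, and consider, for $t\in(0,1]$, the test parallelograms with horizontal sides at heights $-H/2-tB$ and $H/2+tA$. Concavity of $w$ gives
\[
w(-H/2-tB)\ge L(1-t)\quad\text{and}\quad w(H/2+tA)\ge L(1-t),
\]
so the maximality of $P$ yields
\[
\bigl(H+t(A+B)\bigr)\,L(1-t)\le LH.
\]
Dividing by $Lt$ and letting $t\to 0^{+}$ produces $A+B\le H$, hence the vertical extent of $\K$ is at most $2H$.

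The horizontal bound is the main obstacle, since the direction $u$ is fixed and no perturbation of $P$ in the horizontal direction is immediately available. Following the spirit of the proof of the unrestricted result \cite[Theorem 5.5]{GLMP2004}, if some point of $\K$ lay at horizontal distance greater than $L$ from the vertical axis of $P$, one should leverage the simultaneous freedom in varying both the heights $y_1,y_2$ of a test parallelogram and the horizontal positions of its sides inside the chords of $\K$ (which can be strictly wider than $L$ away from $y=\pm H/2$) to construct an inscribed horizontal-sided parallelogram of area exceeding $LH$, contradicting maximality. Once both extent bounds are in hand, $\K$ fits inside a translate of $2P$, and undoing the initial shear delivers the desired parallelogram $P$ in the original $\K$ with a side in direction $u$.
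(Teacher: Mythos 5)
Your strategy is genuinely different from the paper's: the paper does not take the maximal-area inscribed parallelogram with a side in direction $u$, but instead locates (by a continuity argument) an inscribed parallelogram with a horizontal side that is \emph{homothetic} to its circumscribed parallel copy, and then bounds the homothety ratio by $2$ via a projective (Pappus) argument. Within your approach, the reduction is sound — after the shear, $\K$ lies in a translate of $[-L,L]\times[-H,H]$ iff its horizontal and vertical extents are at most $2L$ and $2H$ — and your vertical bound is correct: concavity of $w$ plus $w(\pm H/2)\ge L$ gives $\bigl(H+t(A+B)\bigr)L(1-t)\le LH$ and hence $A+B\le H$. (Your stronger assertion $w(\pm H/2)=L$ is false in general: for the triangle with vertices $(0,0),(1,0),(0,1)$ and $u$ horizontal, the optimal heights are $0$ and $1/2$ with $L=1/2$, yet $w(0)=1$; the perturbation fails because the side already sits on the boundary of the height range. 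Fortunately only the inequality $w(\pm H/2)\ge L$ is used.)

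The genuine gap is the horizontal bound, which you yourself identify as ``the main obstacle'' and then do not prove: the sentence beginning ``one should leverage the simultaneous freedom'' describes a hope, not an argument, and this is precisely the hard part of the lemma. The appeal to the unrestricted result is misleading, because the proof of that result exploits the freedom to \emph{rotate} the sides of the extremal parallelogram — exactly the degree of freedom surrendered by fixing the direction $u$. Moreover, the perturbations that are still available do not obviously suffice: varying the heights and the horizontal positions of a test parallelogram anchored at the right (resp.\ left) corners of $P$ controls only how far $\K$ reaches beyond the right (resp.\ left) side of $P$ separately, and each such one-sided estimate yields only an excess of $L$ per side, i.e.\ a horizontal extent bound of $3L$ rather than $2L$. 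Obtaining $2L$ requires playing the leftmost and rightmost points of $\K$ against each other in a single test parallelogram, and it is not clear that the maximal-area parallelogram is even the right extremal object for this — which is presumably why the paper avoids it. As written, the proof establishes only half of what the lemma asserts.
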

	\begin{proof}
		We may assume that $\K$ is smooth, as the general case follows from standard approximation arguments. Without loss of generality, the direction $u$ is horizontal and the bottom and top horizontal supporting lines of $\K$ are $y=0$ and $y=1$, respectively.
		
		\begin{figure}
			\includegraphics{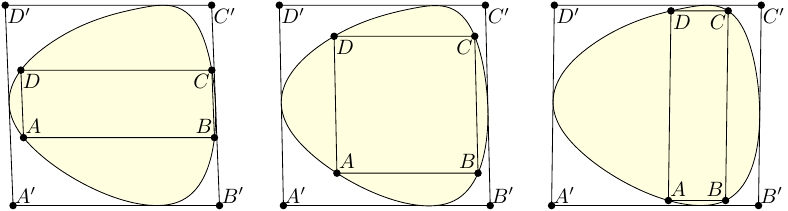}
			\caption{The parallelograms vary the ratio between their sides.}
			\label{fig:parallelograms}
		\end{figure}
		
		The length $l(h)$ of the horizontal chord of $\K$ at height $h\in [0,1]$ depends continuously on $h$ and it is unimodular: $l(0)=0$, then it increases until it attains some maximum $m$ and then goes back to $0$. Therefore, each $l\in[0,m)$ is attained exactly twice.
		
		For every $l\in (0,m)$, let $ABCD$ be the inscribed parallelogram to $\K$ such that $AB$ and $CD$ are horizontal, and $AB=CD=l$. Let $A'B'C'D'$ be the parallelogram circumscribed around $\K$ such that the sides of $A'B'C'D'$ are parallel to the sides of $ABCD$.  See Figure \ref{fig:parallelograms}.
		
		Note that $A'B'C'D'$ is homothetic to $ABCD$ if and only if $A'B'/B'C' = AB/BC$. Let $\alpha$ be the interior angle $\angle DAB$ and assume that $0<r<R$ are real numbers such that there is a disk of radius $r$ contained in $\K$ and $\K$ is contained in a disk of radius $R$. Then $B'C'=1/\sin(\alpha)$ and $2r/\sin(\alpha)\le A'B' \le 2R/\sin(\alpha)$, therefore $2r\le A'B'/B'C'<2R$. However $AB/BC$ tends to $0$ (resp.  $\infty$) as $l$ tends to $0$ (resp. $m$). This variation is continuous, so at some point $ABCD$ and $A'B'C'D'$ are homothetic.
		
		By applying a linear transformation, we may assume that these parallelograms are squares and that $ABCD$ has unit side. Our result follows immediately if we manage to prove the following claim: if $P$ is a unit square and $P'$ is a square homothetic to $P$ such that $P\subset \K\subset P'$, the vertices of $P$ are in $\partial \K$ and $\K$ is internally tangent to the sides of $P'$, then the homothety ratio between $P$ and $P'$ is at most $2$. If this is the case, then $P\subset \K$, and for a translation $Q$ of $2P$ we would have $\K\subset P' \subset Q$ as desired. 
		
		\begin{figure}
			\includegraphics{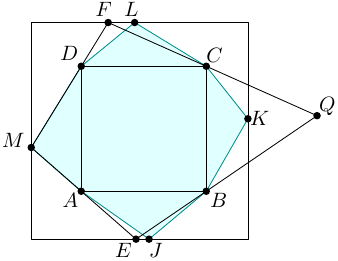}
			\caption{The points where $\K$ touches $P$ and $P'$, and the point $Q$.}
			\label{fig:squares}
		\end{figure}
		
		Recall that the vertices $A,B,C,D$ of $P$ lie in $\partial \K$ and let $J,K,L,M$ be the points where the sides of $P'$ touch $\partial \K$, as in Figure \ref{fig:rightpoint}. These eight points form a convex polygon. Let $E$ be the intersection of the line $MA$ with the bottom side of $P'$ and let $F$ be the intersection of the $MD$ with the top side of $P'$.
		
		By convexity at the angle $LDM$, we have that $F$ lies to the left of $L$. In turn, by convexity at the angle $KCL$ we have that $L$ lies to the left of line $CB$. Therefore, $F$ (and analogously $E$) lies to the left of line $CB$. Then $K$ lies below the line $FC$ and above the line $EB$. We conclude that $K$ lies to the left of the intersection $Q$ of the lines $FC$ and $EB$.
		
		Proceeding by contradiction, we show that if the desired homothety ratio is larger than $2$, then $Q$ is strictly to the left of the side $B'C'$ of $P'$, which is impossible since by the previous argument then $K$ would not be on the side $B'C'$ of $P'$.
		
		As in Figure \ref{fig:rightpoint}, let $\ell$ be the horizontal line through $M$. Define $a$ as the distance from $D$ to $\ell$ and $h$ as the distance from $L$ to $\ell$. Set $X$ and $Y$ as the intersections of the lines $FC$ and $EB$ with $\ell$, respectively. Let $X'$ be the intersection of the line $AB$ and the vertical line through $X$ and define $Y'$ as the intersection of the line $CD$ and the vertical line through $Y$.
		
		\begin{figure}
			\includegraphics{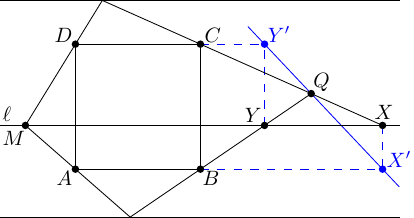}
			\caption{Auxiliary points and lines.}
			\label{fig:rightpoint}
		\end{figure}
		
		Recall that $P$ is a unit square, so $MX=MX/CD=h/(h-a)$. This shows that $X$ depends on the vertical position of $P$ but not on the horizontal position of $P$. The same is true for $Y$ and consequently it is also true for $X'$ and $Y'$. At this point, we may ignore the specific convex body $\K$ and study the possible diagrams we may obtain.
		
		We claim that, if we translate $P$ horizontally (i.e. in diagrams where $P$ has the same vertical position), $Q$ stays on the line $X'Y'$. Indeed, the lines $BX'$, $CY'$ and $XY$ are horizontal, and thus projectively concurrent. The same is true for the lines $BC$, $YY'$ and $XX'$, since they are vertical. Therefore, by the dual of Pappus' theorem \cite{Cox1961} the lines $BY$, $CX$ and $X'Y'$ are concurrent, which means that $Q$ lies on the line $X'Y'$.
		
		Let us assume, that $Q$ is above the line $\ell$. Then $X$ is to the right of $Q$ and $Y$ is to the left of $Q$. This implies that $X'$ is below and to the right of $Y'$. As $P$ is translated to the left, the point $F$ also moves to the left and therefore the line $XF$ intersects the line $X'Y'$ at a lower point. Since this point is $Q$ and the slope of $X'Y'$ is not positive then $Q$ moves to the right as $P$ is translated to the left (i.e. among all possible diagrams where $P$ has the same vertical position, the one where $P$ is leftmost has rightmost $Q$). If $Q$ is below $\ell$ the reasoning is analogous.
		
		If $Q$ lies on $\ell$ then $X'Y'$ is vertical and $Q$ does not move horizontally as $P$ is translated left.
		
		Hence, we only need to prove that $Q$ lies strictly to the left of the side $B'C'$ of $P'$ in the limit case when the square $P$ has its left side contained in the left side of $P'$, as in Figure \ref{fig:extremesquare}.
		
		\begin{figure}
			\includegraphics{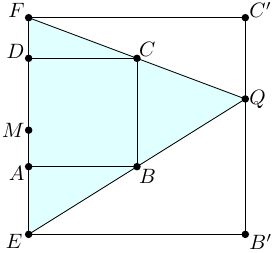}
			\caption{The extreme case when the homothety ratio is exactly $2$.}
			\label{fig:extremesquare}
		\end{figure}	
		
		In this case, if the homothety ratio is exactly $2$, then $CQB$ and $FQE$ are homothetic triangles from $Q$, so the distance from $Q$ to $CB$ is half the distance from $Q$ to $FE$. This means that $Q$ lies exactly on the side $B'C'$ of $P'$. Then, if the homothety ratio is larger than $2$, then the distance from $Q$ to $CB$ is less than half the distance from $Q$ to $FE$, and thus $Q$ is strictly to the left of side $B'C'$. This implies that $K$ is also strictly to the left of the side $B'C'$ of $P'$, which is the desired contradiction to our original assumption that the homothety ratio was larger than $2$. We conclude that the homothety ratio of both squares is at most $2$, as desired.
	\end{proof}
	
	\section{Proof of Theorem \ref{thm}} \label{sec:proof}
	
	Once we have the Lemmas from the previous section, the proof of Theorem \ref{thm} is simple.
	We start by using Lemma \ref{lem:transversal}. If for some index $j$ it happens that $\bigcup_{i\neq j} \F_i$ has piercing number at most $3$, then we are done. Otherwise, there is a line $\ell$ transversal to $\bigcup_i \F_i$ with direction, say, $u$. By Lemma \ref{lem:parallelogram}, there is a paralellogram $P\subset \K$ such that one of the sides of $P$ has direction $u$ and there is a translated copy $R$ of $2P$ such that $\K\subset R$. Let $v$ be the direction of the other side of the paralellogram $P$.
	
	By projecting the convex bodies in the sets $\F_i$ to a line $m$ orthogonal to $v$, we obtain collections $\mathcal{I}_1,\ldots, \mathcal{I}_n$ of intervals on $m$ such that $I \cap I'\neq \emptyset$ for each $I\in \mathcal{I}_i$, $I'\in \mathcal{I}_j$ with $i\neq j$. What follows is a common generalization of the colorful Helly theorem (see e.g. \cite{Bar2021}). If every two intervals in $\bigcup_i \mathcal{I}_i$ intersect, then by Helly's theorem there is a point common to all of them. If not, there are two of these intervals, say $I$ and $I'$, that are disjoint. These intervals must then belong to the same family $I_j$ and therefore any interval $I^\ast$ not in this family must intersect both $I$ and $I'$. Thus, $I^\ast$ contains any point separating $I$ from $I'$. In both cases there is an index $j$ such that the elements of $\bigcup_{i\neq j} \mathcal{I}_i$ have a point in common. By lifting this point in direction $v$, we obtain a line transversal $\ell'$ to $\bigcup_{i\neq j} \F_i$ with direction $v$. This implies that every translate of $\K$ in $\bigcup_{i\neq j} \F_i$ intersects both the line $\ell$ with direction $u$ and the line $\ell'$ with direction $v$. We now exhibit four points that pierce all translates of $\K$ with this property.
	
	Consider the sets
	\begin{align*}
		\mathbf{K}&=\{x\in\R^2: \text{$\K+x$ intersects both $\ell$ and $\ell'$}\}\text{ and}\\
		\mathbf{R}&=\{x\in\R^2: \text{$R+x$ intersects both $\ell$ and $\ell'$}\}.
	\end{align*}
	Note that the set $\mathbf{R}$ is congruent to $R$ and, since $P$ is a parallelogram, the set $-P$ is congruent to $P$. Hence, the set $\mathbf{R}$ can be covered with four copies of $-P$, say $-P+a$, $-P+b$, $-P+c$ and $-P+d$. Then the points $a$, $b$, $c$ and $d$ pierce any translate $\K+x$ that intersect both $\ell$ and $\ell'$. Indeed, if $x\in \mathbf{K}\subset \mathbf{R}$ then $x$ belongs to either $-P+a$, $-P+b$, $-P+c$ or $-P+d$. Without loss of generality we may assume that $x\in-P+a$ which implies that $a\in \K+x$. \qed
	
	\section{Acknowledgments}
	We would like to thank two anonymous referees whose comments helped to improve the presentation of this note. This work was supported by UNAM-PAPIIT project IN111923.
	
	\bibliographystyle{amsplain}
	\bibliography{main}
\end{document}